\newtheorem{theorem}{Theorem}
\theoremstyle{plain}
\renewcommand\bigskip\medskip
\begin{document}
\title[]{Hyperquadratic continued fractions \\ and automatic sequences}
\author{Alain LASJAUNIAS and Jia-Yan YAO}
\date{\today}

\begin{abstract}
The aim of this note is to show the existence of a correspondance between
certain algebraic continued fractions in fields of power series over a
finite field and automatic sequences in the same finite field. This
connection is illustrated by three families of examples and a counterexample.
\end{abstract}

\subjclass{Primary 11J70, 11T55; Secondary 11B85}
\keywords{finite fields,
power series over a finite field, continued fractions, finite automata,
automatic sequences}
\maketitle

\section{Introduction}

Let $\mathbb{F}_{q}$ be the finite field containing $q$ elements, with $%
q=p^{s}$ where $p$ is a prime number and $s\geqslant 1$ is an integer. We
consider the field of power series in $1/T$, with coefficients in $\mathbb{F}_{q}$,
 where $T$ is a formal indeterminate. We will denote this field by $%
\mathbb{F}(q)$. Hence a non-zero element of $\mathbb{F}(q)$ is written as $%
\alpha =\sum_{k\leqslant k_{0}}a_{k}T^{k}$ with $k_{0}\in \mathbb{Z}$, $%
a_{k}\in \mathbb{F}_{q}$, and $a_{k_{0}}\neq 0$. Noting the analogy of this
expansion with a decimal expansion for a real number, it is natural to
regard the elements of $\mathbb{F}(q)$ as (formal) numbers and indeed they
are analogue to real numbers in many ways.

It is well known that the sequence of coefficients (or digits) $%
(a_{k})_{k\leqslant k_{0}}$ for $\alpha $ is ultimately periodic if and only
if $\alpha $ is rational, that is $\alpha $ belongs to $\mathbb{F}_{q}(T)$.
However, and this is a singularity of the formal case, this sequence of
digits can also be characterized for all elements in $\mathbb{F}(q)$ which
are algebraic over $\mathbb{F}_{q}(T)$. The origin of the following theorem
can be found in the work of Christol \cite{C} (see also the article of Christol, Kamae, Mend\`{e}s France, and Rauzy \cite {CKMFR}).

\begin{theorem}[Christol]
Let $\alpha $ in $\mathbb{F}(q)$ with $q=p^s$. Let $(a_{k})_{k\leqslant k_{0}}$ be the sequence of
digits of $\alpha$ and $u(n)=a_{-n}$ for all integers $n\geqslant 0$. Then $\alpha $ is
algebraic over $\mathbb{F}_{q}(T)$ if and only if the following set of
subsequences of $(u(n))_{n\geqslant 0}$
\begin{equation*}
K(u)=\left\{ {(u(p^{i}n+j))_{n\geqslant 0}\,|\,\,i\geqslant
0,\,0\leqslant j<p}^{i}\right\}
\end{equation*}%
is finite.
\end{theorem}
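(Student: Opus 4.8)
The plan is to recast the condition on $K(u)$ as a statement of linear algebra over the field $\mathbb{F}_q(T)$, through the action of the de-interleaving (Cartier) operators. Set $X=1/T$ and $f=\sum_{n\geqslant 0}u(n)X^{n}\in\mathbb{F}_q[[X]]$; then $\alpha-f\in\mathbb{F}_q[T]$, so $\alpha$ is algebraic over $\mathbb{F}_q(T)$ if and only if $f$ is algebraic over $\mathbb{F}_q(X)$. Moreover $K(u)$ is finite if and only if the $q$-kernel $K_q(u)=\{(u(q^{i}n+j))_{n}:i\geqslant 0,\ 0\leqslant j<q^{i}\}$ is finite: one inclusion holds because $q^{i}=p^{si}$, and for the other one writes $p^{a}=q^{\lfloor a/s\rfloor}p^{t}$ with $0\leqslant t<s$ and splits the offset accordingly, realizing every sequence in $K(u)$ as a $p^{t}$-section of a sequence in $K_q(u)$ (a set of cardinality at most $|K_q(u)|(p^{s}-1)/(p-1)$). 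For $0\leqslant r<q$ let $\Lambda_{r}$ act on $\mathbb{F}_q((X))$ by $\Lambda_{r}(\sum_{m}c_{m}X^{m})=\sum_{m}c_{qm+r}X^{m}$; since $c^{q}=c$ for $c\in\mathbb{F}_q$, these operators satisfy $g=\sum_{r=0}^{q-1}X^{r}\Lambda_{r}(g)^{q}$ and $\Lambda_{r}(h^{q}g)=h\,\Lambda_{r}(g)$, and $\Lambda_{r}(X^{\ell}g^{q})$ equals $X^{\lfloor\ell/q\rfloor}g$ if $\ell\equiv r\pmod q$ and $0$ otherwise. Transporting sequences to their generating series, $K_q(u)$ corresponds to the orbit $\{\Lambda_{w}(f):w\ \text{a finite word over}\ \{0,\dots,q-1\}\}$, which is closed under the $\Lambda_{r}$; so the theorem becomes the assertion that $f$ is algebraic over $\mathbb{F}_q(X)$ if and only if this orbit is finite.

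For the implication in which finiteness of the orbit yields algebraicity of $f$, I would let $V$ be the $\mathbb{F}_q(X)$-span, inside $\mathbb{F}_q((X))$, of the finitely many series in the orbit; it is finite-dimensional and contains $f$. Closure of the orbit under the $\Lambda_{r}$ together with the identity $g=\sum_{r}X^{r}\Lambda_{r}(g)^{q}$ gives $V\subseteq\sum_{r=0}^{q-1}X^{r}V^{q}$. The right-hand side is $\mathbb{F}_q(X)$-stable, and since $\{1,X,\dots,X^{q-1}\}$ is a basis of $\mathbb{F}_q(X)$ over $\mathbb{F}_q(X^{q})=\mathbb{F}_q(X)^{q}$ it has $\mathbb{F}_q(X)$-dimension at most $\dim_{\mathbb{F}_q(X)}V$; hence the inclusion is an equality and in particular $V^{q}\subseteq V$. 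Therefore $f,f^{q},f^{q^{2}},\dots$ all lie in the finite-dimensional space $V$, so some nontrivial relation $\sum_{i}A_{i}(X)f^{q^{i}}=0$ holds with $A_{i}\in\mathbb{F}_q[X]$, which exhibits $f$ as a root of the nonzero polynomial $\sum_{i}A_{i}(X)Y^{q^{i}}$.

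For the reverse implication, that an algebraic $f$ has a finite orbit, let $d=\deg f$, so $V_{0}:=\mathrm{span}_{\mathbb{F}_q(X)}\{f^{q^{i}}:i\geqslant 0\}$ is finite-dimensional (it sits inside $\mathrm{span}_{\mathbb{F}_q(X)}\{1,f,\dots,f^{d-1}\}$). The crucial step is that $f$ already lies in $V_{1}:=\mathrm{span}_{\mathbb{F}_q(X)}\{f^{q^{i}}:i\geqslant 1\}$: for an $\mathbb{F}_q(X)$-basis $u_{1},\dots,u_{m}$ of $V_{0}$ one has $V_{1}=\mathrm{span}_{\mathbb{F}_q(X)}\{u_{1}^{q},\dots,u_{m}^{q}\}$, and the $u_{k}^{q}$ stay $\mathbb{F}_q(X)$-independent (decomposing a relation $\sum_{k}\lambda_{k}u_{k}^{q}=0$ along $\mathbb{F}_q((X))=\bigoplus_{r}X^{r}\mathbb{F}_q((X))^{q}$ reduces it to an $\mathbb{F}_q(X)$-relation among the $u_{k}$), so $\dim V_{1}=\dim V_{0}$ and $V_{1}=V_{0}\ni f$. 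Hence $f$ satisfies a relation $Ef=\sum_{i=1}^{N}B_{i}f^{q^{i}}$ with $E,B_{i}\in\mathbb{F}_q[X]$, $E\neq 0$, and with no $f^{q^{0}}$ term. Writing $g_{j}=f^{q^{j}}$ and fixing a large integer $M$, I would then consider the finite set $\mathcal{S}=\{E^{-1}\sum_{j=0}^{N}P_{j}g_{j}:P_{j}\in\mathbb{F}_q[X],\ \deg P_{j}\leqslant M\}$: the relation above shows $f\in\mathcal{S}$, and using $g_{j}=g_{j-1}^{q}$ (for $j\geqslant 1$) together with $E^{-1}=E^{q-1}(E^{-1})^{q}$, and the substitution $f=E^{-1}\sum_{i\geqslant 1}B_{i}g_{i-1}^{q}$ on the $g_{0}$-term (the resulting extra factor $E^{-1}$ being absorbed into a $q$-th power since $q\geqslant 2$), the formula for $\Lambda_{r}$ on $X^{\ell}g^{q}$ shows that each $\Lambda_{r}$ maps $\mathcal{S}$ into the same kind of set, keeping the denominator $E$ fixed and replacing polynomial degrees by at most $\deg(\cdot)/q$ plus a constant depending only on $\deg E$ and $\max_{i}\deg B_{i}$; so for $M$ large, $\mathcal{S}$ is $\Lambda_{r}$-stable. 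Then $\{\Lambda_{w}(f)\}\subseteq\mathcal{S}$ is finite, whence $K_q(u)$, and therefore $K(u)$, is finite.

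I expect the difficulty to be concentrated entirely in this last direction, and there at two points. The first is the passage from algebraicity to a Frobenius-linear relation with vanishing constant term, $Ef=\sum_{i\geqslant 1}B_{i}f^{q^{i}}$, rather than merely some relation $\sum_{i\geqslant 0}A_{i}f^{q^{i}}=0$; this is precisely what the identity $V_{1}=V_{0}$ delivers. The second, and really the heart of the matter, is the quantitative bookkeeping showing that each $\Lambda_{r}$ preserves the fixed denominator $E$ and only divides polynomial degrees by $q$ up to an additive constant. It is this contraction property that upgrades the mere finite-dimensionality of the span of $\{\Lambda_{w}(f)\}$ to the finiteness of the set $\{\Lambda_{w}(f)\}$ itself, which is exactly what the finiteness of $K(u)$ requires.
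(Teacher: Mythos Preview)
The paper does not prove this theorem at all: Christol's theorem is quoted in the introduction as background, with references to \cite{C} and \cite{CKMFR}, and the paper then moves on to use it. So there is no ``paper's own proof'' to compare against.

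That said, your argument is correct and is essentially the classical proof from those references, organized around the Cartier operators $\Lambda_r$. Your reduction from the $p$-kernel $K(u)$ to the $q$-kernel $K_q(u)$ is clean and correct. In the direction ``finite orbit $\Rightarrow$ algebraic'', your dimension count showing $V=\sum_r X^rV^q$ and hence $V^q\subseteq V$ is exactly the right mechanism. In the direction ``algebraic $\Rightarrow$ finite orbit'', the crucial step---obtaining a relation $Ef=\sum_{i\geqslant 1}B_i f^{q^i}$ with no $f^{q^0}$ term---is precisely Ore's theorem, and your proof via $\dim V_1=\dim V_0$ (using that Frobenius preserves $\mathbb{F}_q(X)$-independence inside $\mathbb{F}_q((X))$) is the standard and correct one. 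The final degree bookkeeping is only sketched, but the inequality you need, namely
\[
\frac{(q-1)\deg E+\max_i\deg B_i+M}{q}\leqslant M,
\]
is satisfied as soon as $M\geqslant \deg E+\dfrac{\max_i\deg B_i}{q-1}$, so the contraction closes up and $\mathcal{S}$ is indeed $\Lambda_r$-stable. One small cosmetic point: after a single application of the $\Lambda_r$, only $g_0,\dots,g_{N-1}$ appear, so you may as well take $\mathcal{S}$ with $0\leqslant j\leqslant N-1$ from the start (noting that $f=E^{-1}\cdot E\cdot g_0$ already lies there once $M\geqslant\deg E$); this changes nothing in substance.
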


The sequences having the finiteness property stated in this theorem were
first introduced in the 1960's by computer scientists. Considered in a
larger setting (see the beginning of Section 3), they are now called
automatic sequences, and form a class of deterministic sequences which can
be defined in several different ways. A full account on this topic and a
very complete list of references are to be found in the book of Allouche and Shallit \cite{AS}. In this note we want to show a different type
of connection between automatic sequences and some particular algebraic
power series in $\mathbb{F}(q)$.

Firstly, let us describe these particular algebraic elements. Let $\alpha $
be irrational in $\mathbb{F}(q)$. We say that $\alpha $ is hyperquadratic,
if there exists $r=p^{t}$, where $t\geqslant 0$ is an integer, such that the
elements $\alpha ^{r+1}$, $\alpha ^{r}$, $\alpha $, and $1$ are linked over $%
\mathbb{F}_{q}(T)$. Hence an hyperquadratic element is algebraic over $%
\mathbb{F}_{q}(T)$, of degree $\leqslant r+1$. The subset of hyperquadratic
elements in $\mathbb{F}(q)$ is denoted $\mathcal{H}(q)$. Note that this
subset contains the quadratic power series (take $r=1$) and also the cubic
power series (take $r=p$). Originally, these algebraic elements were
introduced in the 1970's by Baum and Sweet (see \cite{BS1}), in the
particular case $q=2$, and later considered in the 1980's by Mills and
Robbins \cite{MR} and Voloch \cite{V}, in all characteristic. It appears that $\mathcal{H}%
(q)$ contains elements having an arbitrary large algebraic degree. But
hyperquadratic power series are rare: an algebraic power series of high
algebraic degree has a small probability to be hyperquadratic. For different reasons,
this subset $\mathcal{H}(q)$ could be regarded as the analogue of the subset
of quadratic real numbers.

Besides, it is well known that any irrational element $\alpha $ in $\mathbb{F%
}(q)$ can be expanded as an infinite continued fraction where the partial
quotients $a_{n}$ are polynomials in $\mathbb{F}_{p}[T]$, all of positive
degree, except perhaps for the first one. We will use the traditional
notation $\alpha =[a_{1},a_{2},\ldots ,a_{n},\ldots ]$. The explicit
description of continued fractions for algebraic power series over a finite
field goes back to the works \cite{BS1,BS2} of Baum and Sweet, again when
the base field is $\mathbb{F}_{2}$. It was carried on ten years later by
Mills and Robbins in \cite{MR}. In the real case, no explicit continued
fraction expansion, algebraic of degree $n>2$, is known. On the other hand this expansion
for quadratic real numbers is well known to be ultimately periodic. In the formal
case, the situation is more complex. Quadratic power series have also an
ultimately periodic continued fraction expansion, but many other
hyperquadratic continued fractions can also be explicitly described. Most of
the elements in $\mathcal{H}(q)$ have an unbounded sequence of partial
quotients, but there are also expansions with all partial quotients of
degree $1$. This last phenomenon was discovered firstly by Mills and Robbins
in \cite{MR}, and later studied more deeply by Lasjaunias and Yao in \cite%
{LY}. Even though the pattern of hyperquadratic expansions can sometimes be very
sophisticated (see for instance the work \cite{F} of Firicel, where a
generalization of the cubic introduced by Baum and Sweet is presented), it
is yet doubtful wether this description, even partial, is possible for all
power series in $\mathcal{H}(q)$.

Power series in $\mathcal{H}(q)$ have particular properties concerning
Diophantine approximation and this is also why they were first considered.
The work \cite{M} of Mahler in this area, is fundamental. There, a first
historical example of hyperquadratic power series, on which we come back
below in this note, was introduced. Note that the irrationality measure (also called approximation exponent, see for instance \cite[p.214]{L2}) of a
power series can be computed if the explicit continued fraction for this
element is known. In this way, for many elements in $\mathcal{H}(q)$, the
irrationality measure, often greater than $2$ for non-quadratic elements,
has been given. Hence, contrary to the real case, many algebraic power
series of degree $>2$, most of them hyperquadratic, are known to have an
irrationality measure greater than $2.$ Actually, for algebraic power series
which are not hyperquadratic, concerning their continued fraction expansions
and their irrationality measure, not so much is known. The reader may
consult Schmidt \cite{S} and Lasjaunias \cite{L2}, for instance, for more
informations and references on this matter.

With each infinite continued fraction in  $\mathbb{F}(q)$, we can associate a sequence
in $\mathbb{F}_{q}^*$ in the following way: if $\alpha =[a_{1},a_{2},\ldots ,a_{n},\ldots ]$
then, for $n\geq 1$, we define $u(n)$ as the leading coefficient of the polynomial $a_n$.
For several examples in $\mathcal{H}(q)$, we
have observed that the sequence $(u(n))_{n\geq 1}$ is automatic.
Indeed, a first observation in this area is due to Allouche \cite{A}. In the
article \cite{MR} of Mills and Robbins, for all $p\geqslant 5$, a particular family of continued
fractions in $\mathcal{H}(p)$, having $a_{n}=\lambda _{n}T$ for all integers $n\geqslant 1$ with
$\lambda _{n}$ in $\mathbb{F}_{p}^{\ast }$, was introduced. Shortly after the publication of
\cite{MR}, Allouche could prove in \cite{A} that the sequence $(\lambda
_{n})_{n\geqslant 1}$ in $\mathbb{F}_{p}^{\ast }$ is automatic (see also the
last section of \cite{LY}, where this question is discussed in a larger
context). In the present note (Section 2), we shall describe several
families of hyperquadratic continued fractions and we show in Section 3 that
the sequence associated with them, as indicated above, is also automatic.

Yet it is an open question to know wether this is true for all elements in $%
\mathcal{H}(q)$. If the answer were negative, it would be interesting to be
able to characterize the elements in $\mathcal{H}(q)$ which have this
property. As mentioned above, very little is known, concerning continued
fractions, for algebraic power series which are not hyperquadratic. However
an element in $\mathbb{F}(3)$, algebraic of degree $4$, was introduced by
Robbins and Mills in \cite{MR}. This element is not hyperquadratic. In this note (Section 4),
we show that the sequence in $\mathbb{F}_{3}^{\ast }$, associated as above with its
 continued fraction expansion, is not automatic.

\section{Three families of hyperquadratic continued fractions}

In this section we shall use the notation and results found in \cite{L3}.

Let $\alpha $ be an irrational element in $\mathbb{F}(q)$ with $\alpha
=[a_{1},\ldots ,a_{n},\ldots ]$ as its continued fraction expansion. We
denote by $\mathbb{F}(q)^{+}$ the subset of $\mathbb{F}(q)$ containing the
elements having an integral part of positive degree (i.e. with $\deg
(a_{1})>0$). For all integers $n\geqslant 1$, we put $\alpha
_{n}=[a_{n},a_{n+1},\ldots ]$ ($\alpha_1=\alpha$), and we introduce the continuants
 $x_{n},y_{n}\in \mathbb{F}_{q}[T]$ such that $x_{n}/y_{n}=[a_{1},a_{2},\ldots
,a_{n}] $. As usual we
extend the latter notation to $n=0$ with $x_{0}=1$ and $y_{0}=0$. Observe
that the notation used here for the continuants $x_{n}$ and $y_{n}$ is
different from the one used in \cite{L3}, and hopefully simplified.

As above we set $r=p^{t}$, where $t\geqslant 0$ is an integer. Let $P,Q\in
\mathbb{F}_{q}[T]$ such that $\deg (Q)<\deg (P)<r$. Let $\ell \geqslant 1$
be an integer and $A_{\ell }=(a_{1},a_{2},\ldots ,a_{\ell })$ a vector in $(%
\mathbb{F}_{q}[T])^{\ell }$ such that $\deg (a_{i})>0$ for $1\leqslant
i\leqslant \ell $. Then by Theorem 1 in \cite{L3}, there exists an infinite
continued fraction in $\mathbb{F}(q)$ defined by $\alpha
=[a_{1},a_{2},\ldots ,a_{\ell },\alpha _{\ell +1}]$ such that $\alpha
^{r}=P\alpha _{\ell +1}+Q$. This element $\alpha $ is hyperquadratic and it
is the unique root in $\mathbb{F}(q)^{+}$ of the following algebraic
equation:
\begin{equation}
y_{\ell }X^{r+1}-x_{\ell }X^{r}+(y_{\ell -1}P-y_{\ell }Q)X+x_{\ell
}Q-x_{\ell -1}P=0.  \label{eq0}
\end{equation}%
Note that if $r=1$, then $\alpha $ is quadratic. In this case $P$
is a nonzero constant polynomial, i.e. $P=\varepsilon \in \mathbb{F}%
_{q}^{\ast }$ and $Q=0$. Given $\ell $ and $A_{\ell }$, we have $\alpha
=\varepsilon \alpha _{\ell +1}$, and this implies $a_{\ell +m}=\varepsilon
^{(-1)^{m}}a_{m}$, for all integers $m\geqslant 1$. Hence the continued
fraction expansion is purely periodic (a simple computation shows that the
length of the period is at most $2\ell $ if $\ell $ is odd or $(q-1)\ell $
if $\ell $ is even).
\medskip

We shall describe three families of
continued fractions generated as above.
\medskip

\textbf{First family: }$\mathcal{F}1$. The simplest and first case that we
consider is $(P,Q)=(\varepsilon ,0)$ where $\varepsilon \in \mathbb{F}%
_{q}^{\ast }$ and consequently $\alpha^r=\varepsilon \alpha_{l+1}$. Due to the Frobenius isomorphism, we obtain, in the same way
as above for $r=1$, the relation $a_{\ell +m}=\varepsilon
^{(-1)^{m}}a_{m}^{r}$ for all integers $m\geqslant 1$. Hence the continued
fraction, depending on the arbitrary given $\ell $ first partial quotients,
is fully explicit. These hyperquadratic continued fractions were studied
independently by Schmidt \cite{S} and Thakur \cite{T} (particularly for $%
\varepsilon =1$). Let us recall that the elements in $\mathcal{H}(q)$,
called here hyperquadratic, were first named in \cite{L1} as algebraic of
class I, and then the elements studied by Schmidt and Thakur were called
algebraic of class IA. The possibility of choosing arbitrarily the vector $%
A_{\ell }$ has an important consequence. Even though this is not truly the
matter of this note, we have already mentioned the irrationality measure $%
\nu (\alpha )$ of $\alpha $ in $\mathbb{F}(q)$. In his fundamental work \cite%
{M}, Mahler established (following an old result of Liouville in the real
case) that if $\alpha \in \mathbb{F}(q)$ is algebraic of degree $d\geqslant 2$ over $%
\mathbb{F}_{q}(T)$, then we have $\nu (\alpha ) \in [2,d]$. Besides $\nu
(\alpha )$ is directly depending on the sequence of the degrees of the
partial quotients for $\alpha $ (see for instance \cite[p.214]{L2}). For an
element of $\mathcal{F}1$, this sequence of degrees $(d_{n})_{n\geqslant 1}$%
, satisfies $d_{\ell +m}=rd_{m}$ for all integers $m\geqslant 1$, and
consequently $d_{n}$ depends directly on the first $\ell $ degrees. Hence,
Schmidt and Thakur, independently, could obtain (by a sophisticated
computation) the irrationality measure for such an element, depending on $r$
 and the first $\ell $ degrees. In this way they could establish the
following result: for each rational number $\mu $ in the range $[2,+\infty [$%
, there exists $\alpha $ in $\mathcal{F}1$ such that $\nu (\alpha )=\mu $.

Let us make a last observation on the simplest element in $\mathcal{F}1$. We
take $\ell =1$ and $a_{1}=T$, with $(P,Q)=(1,0)$, then the corresponding
continued fraction is $\Theta _{1}=[T,T^{r},T^{r^{2}},\ldots
,T^{r^{n}},\ldots ]$, and $\Theta _{1}$ satisfies $X=T+1/X^{r}$. Here the
irrationality measure is easy to compute, and indeed we have $\nu (\Theta
_{1})=r+1$. Since the degree $d$ of $\Theta _{1}$ satisfies $d\leqslant r+1$%
, using Mahler's argument, we see that $\Theta _{1}$ has algebraic degree
equal to $r+1$. Note that for a general element $\alpha $ in $\mathcal{F}1$,
its exact algebraic degree is an undecided question.
\medskip

\textbf{Second and third families: }$\mathcal{F}2$ and $\mathcal{F}3$.%
\textbf{\ }For an element $\alpha $ in $\mathcal{F}2$, we assume $%
(P,Q)=(\varepsilon _{1}T,\varepsilon _{2})$; and for an element $\alpha $ in $%
\mathcal{F}3$, we assume $(P,Q)=(\varepsilon _{1}T^{2},\varepsilon _{2}T)$,
where $(\varepsilon _{1},\varepsilon _{2})$ is a pair in $(\mathbb{F}%
_{q}^{\ast })^{2}$. Note that here we have $r>1$ for elements in $\mathcal{F}%
2$, but $r>2$ for elements in $\mathcal{F}3$. Our aim is to give the
explicit continued fraction expansion for $\alpha $. The integer $\ell\geqslant 1$, as
above, is chosen arbitrarily. However we need to impose a
restriction on the choice of the vector $A_{\ell }$, in both cases: we
assume that $T$ divides $a_{i}$ for all integers $i$ with $1\leqslant
i\leqslant \ell $. Then we can describe the sequence of partial quotients in
the continued fraction expansion in both cases. Given $A_{\ell }$, chosen as
indicated, for all integers $n\geqslant 0$, we have: if $\alpha $ is in $%
\mathcal{F}2$
\begin{equation*}
a_{\ell +4n+1}=\frac{a_{2n+1}^{r}}{\varepsilon _{1}T},\ a_{\ell +4n+2}=-%
\frac{\varepsilon _{1}}{\varepsilon _{2}}T,\ a_{\ell +4n+3}=-\frac{%
\varepsilon _{2}^{2}a_{2n+2}^{r}}{\varepsilon _{1}T},\ a_{\ell +4n+4}=\frac{%
\varepsilon _{1}}{\varepsilon _{2}}T
\end{equation*}%
and if $\alpha $ is in $\mathcal{F}3$
\begin{equation*}
a_{\ell +4n+1}=\frac{a_{2n+1}^{r}}{\varepsilon _{1}T^{2}},\ a_{\ell +4n+2}=-%
\frac{\varepsilon _{1}}{\varepsilon _{2}}T,\ a_{\ell +4n+3}=-\frac{%
\varepsilon _{2}^{2}a_{2n+2}^{r}}{\varepsilon _{1}},\ a_{\ell +4n+4}=\frac{%
\varepsilon _{1}}{\varepsilon _{2}}T.
\end{equation*}%
The method to obtain these formulas is the same in both cases and it has
been explained in \cite{L3}. Actually the formulas for $\alpha $ in $%
\mathcal{F}2$ were published in \cite{L3} on the top of p.334. However, note
that there is a mistake in the final statement given there and the pair $%
(k,k+1)$ to the right hand side of the first and third formulas must be
replaced by the pair $(2k-1,2k)$. Concerning the formulas for $\alpha $ in $%
\mathcal{F}3$, they were given by Firicel in \cite{F} (case $\varepsilon
_{1}=\varepsilon _{2}=-1$). The reader is invited to consult this last work
where the method is clearly explained. Note that the particular choice of
the vector $A_{\ell }$, has been made in order to have an integer
(polynomial) when dividing by $T$ or $T^{2}$ (however a larger choice could
have been possible for elements in $\mathcal{F}3$, in a particular case: for
example, if $\ell $ is odd, then it would be enough to assume that $T$
divides $a_{i}$ only for odd indices $i$). Note that if we do not assume the
particular choice we made for the vector $A_{\ell }$, the continued fraction
for $\alpha $ does exist, but an explicit description is not given.

We make a comment concerning the last family $\mathcal{F}3$. It has been
introduced to cover a particular case of historical importance. In his
fundamental paper \cite{M}, Mahler presented the following power series $%
\Theta _{2}=1/T+1/T^{r}+\cdots +1/T^{r^{n}}+\cdots $. Note that $\Theta _{2}$
can be regarded as a dual of the element $\Theta _{1}$ in $\mathcal{F}1$,
presented above. However we clearly have $\Theta _{2}=1/T+\Theta _{2}^{r}$,
hence $\Theta _{2}$ is hyperquadratic and algebraic of degree $\leqslant r$.
Mahler observed that we have $\nu (\Theta _{2})=r$, and therefore the
algebraic degree of $\Theta _{2}$ is equal to $r$. Now let us consider the
element of $\mathcal{F}3$ defined as above by the pair $(P,Q)=(-T^{2},-T)$
with $\ell =1$ and $a_{1}=T$. Then $\alpha $ is the unique root in $\mathbb{F%
}(p)^{+}$ of the following algebraic equation:%
\begin{equation*}
X^{r+1}-TX^{r}+TX=0.
\end{equation*}%
Set $\beta =1/\alpha $, and we get $\beta =1/T+\beta ^{r}$. Since $1/\beta $
is in $\mathbb{F}(p)^{+}$, we obtain%
\begin{equation*}
\beta =\Theta _{2}=1/T+1/T^{r}+1/T^{r^{2}}+\cdots 
\end{equation*}%
Let us recall that the sequence of partial quotients for $\Theta _{2}$ has been long
known (see  for example \cite[p.633%
]{F} with the references therein. See also \cite[p.224]{L1} for a different approach).

As we wrote in the introduction, we are interested in the leading
coefficients of partial quotients. If $(a_{n})_{n\geqslant 1}$ is the
sequence of partial quotients for $\alpha $, we denote by $u(n)$ the leading
coefficient of $a_{n}$. If $\alpha $ is in $\mathcal{F}1$, since we have
$$
u(\ell +m)=\varepsilon^{(-1)^{m}}(u(m))^{r}
$$
for all integers $m\geqslant 1$, it is easy to see
that the sequence $(u(n))_{n\geqslant 1}$ is purely periodic. The length of
the period might be a multiple of $\ell $, and it depends on the value for $%
\varepsilon $ and the relationships between $p$, $q$ and $r$.

Now we turn to the case where $\alpha $ belongs to $\mathcal{F}2$ or $\mathcal{F}%
3 $. Both recursive definitions for the sequence of partial quotients,
wether $\alpha $ is in $\mathcal{F}2$ or $\mathcal{F}3$, imply the same
recursive definition for the corresponding sequence $(u(n))_{n\geqslant 1}$.
More precisely, for all integers $\ell \geqslant 1$, given $\ell $ values $%
u(1),u(2),\ldots ,u(\ell )$ in $\mathbb{F}_{q}^{\ast }$, we have, for all
integers $n\geqslant 0$,%
\begin{equation*}
\begin{array}{ll}
u(\ell +4n+1)=\varepsilon _{1}^{-1}(u(2n+1))^{r}, & u(\ell
+4n+2)=-\varepsilon _{1}\varepsilon _{2}^{-1},\  \\
u(\ell +4n+3)=-\varepsilon _{1}^{-1}\varepsilon _{2}^{2}(u(2n+2))^{r}, &
u(\ell +4n+4)=\varepsilon _{1}\varepsilon _{2}^{-1}.%
\end{array}%
\end{equation*}%
In the next section we shall see in Theorem \ref{thm2} that sequences of
this type, for all integers $\ell \geqslant 1$, are $2$-automatic sequences.

To conclude this section, we go back to the special element $1/\Theta _{2}$
in $\mathcal{F}3$, and the associated sequence $(u(n))_{n\geqslant 1}$ where
$l=1$ and $u(1)=1$. The latter is remarkable and has been studied extensively (see for
example \cite[Section 6.5]{AS}). We define
recursively the sequence of finite words $(W_{n})_{n\geqslant 1}$ by $%
W_{1}=1 $, and $W_{n+1}=W_{n},-1,W_{n}^{R}$, where
commas indicate here concatenation of words, and $W_{n}^{R}$ is the
reverse of the finite word $W_{n}$. Let $W$ be the infinite word beginning
with $W_{n}$ for all integers $n\geqslant 1$. Then one can check that the
sequence $(u(n))_{n\geqslant 1}$ coincides with $W$, and it is a special
paperfolding sequence which is known to be $2$-automatic (see for example
\cite[Theorem 6.5.4]{AS}).

\section{A family of automatic sequences}

In this section, we begin with the definition of automatic sequences. For
more details about this subject, see the book \cite{AS} of Allouche and
Shallit.

Let $A$ be a finite nonempty set, called an alphabet, of which every element
is called a letter. Fix $\emptyset $ an element not in $A$ and call it an
empty letter over $A$. Let $n\geqslant 0$ be an integer. If $n=0$, define $%
A^{0}=\{\emptyset \}$. For $n\geqslant 1$, denote by $A^{n}$ the set of all
finite sequences in $A$ of length $n$. Put $A^{\ast}=\bigcup_{n=0}^{+\infty }A^{n}$. Every element $w$ of $A^{\ast }$ is
called a word over $A$ and its length is noted $|w|$, i.e. $|w|=$ $n$ if $%
w\in A^{n}$.

Take $w,v\in A^{\ast }$. The concatenation between $w$ and $v$ (denoted by $%
w\ast v$ or more simply by $wv$) is the word over $A$ which begins with $w$
and is continued by $v$.

Now we give below a definition of finite automaton (see for example \cite{E}%
):\smallskip\

A finite automaton $\mathcal{A}=(S,s_{0},\Sigma ,\tau )$\ consists of

\begin{itemize}
\item an alphabet $S$ of states; one state $s_{0}$ is distinguished and
called initial state.

\item a mapping $\tau :S\times \Sigma \rightarrow S$, called transition
function, where $\Sigma $ is an alphabet containing at least two
elements.\smallskip
\end{itemize}

For any $a\in S$, put $\tau (a,\emptyset )=a$. Extend $\tau $ over $S\times
\Sigma ^{\ast }$ (noted again $\tau $) such that
\begin{equation*}
\forall \ a\in S\text{ and }l,m\in \Sigma ^{\ast },\text{ we have }\tau
(a,lm)=\tau (\tau (a,l),m).
\end{equation*}

Let $k\geqslant 2$ be an integer and $\Sigma _{k}=\{0,1,\ldots ,k-1\}$. We
call $v=(v(n))_{n\geqslant 0}$ a $k$-automatic sequence if there exist a
finite automaton $\mathcal{A}=(S,s_{0},\Sigma _{k},\tau )$ and a mapping $o$
defined on $S$ such that $v(0)=o(s_{0})$, and for all integers $n\geqslant 1$
with standard $k$-adic expansion $n=\sum_{j=0}^{l}n_{j}k^{j}$, we have $%
v(n)=o(\tau(s_{0},n_{l}\cdots n_{0}))$.

We recall that all ultimately periodic
sequences are $k$-automatic for all $k\geqslant 2$, adding a prefix to a sequence does not change
its automaticity, and that a sequence is $k$-automatic if and only if it is $%
k^{m}$-automatic for all integers $m\geqslant 1$. In this work, we consider sequences of the form $%
v=(v(n))_{n\geqslant 1}$, and we say that $v$ is $k$-automatic if the
sequence $(v(n))_{n\geqslant 0}$ is $k$-automatic, with $v(0)$ fixed
arbitrarily. We have the following important characterization:
a sequence $v=(v(n))_{n\geqslant 1}$ is $k$-automatic if and only if
its $k$-kernel
\begin{equation*}
K_{k}(v)=\left\{ {(v(k^{i}n+j))_{n\geqslant 1}\,|\,\,i\geqslant
0,\,0\leqslant j<k}^{i}\right\}
\end{equation*}%
is a finite set. The origin of this characterization for automatic sequences is due to S. Eilenberg \cite[p.107]{E},
who was one of the first to publish a general treatise on this matter.

Let $v=(v(n))_{n\geqslant 1}$ be a sequence. For all integers $n\geqslant 1$%
, we define%
\begin{equation*}
(T_{0}v)(n)=v(2n)\text{ and }(T_{1}v)(n)=v(2n+1).
\end{equation*}%
Then for all integers $n,a\geqslant 1$, and $0\leqslant b<2^{a}$ with binary
expansion%
\begin{equation*}
b=\sum_{j=0}^{a-1}b_{j}2^{j}\qquad (0\leqslant b_{j}<2),
\end{equation*}%
with the help of the operators $T_{0}$ and $T_{1}$, we obtain
\begin{equation*}
v(2^{a}n+b)=(T_{b_{a-1}}\circ T_{b_{a-2}}\circ \cdots \circ T_{b_{0}}v)(n).
\end{equation*}%
In particular, we obtain that $v$ is $2$-automatic if and only if both $%
T_{0}v,T_{1}v$ are $2$-automatic, for we have $K_{2}(v)=\{v\}\cup
K_{2}(T_{0}v)\cup K_{2}(T_{1}v)$.

With these definitions, we have the following theorem, which can be compared
with a result of Allouche and Shallit (see \cite[Theorem 2.2]{as2}).

\begin{theorem}
\label{thm3}Let $m\geqslant 0$ be an integer, $v=(v(n))_{n\geqslant 1}$ a
sequence in an alphabet $A$, and $\sigma $ a bijection on $A$.

\begin{enumerate}
\item[($0_{m}$)] If $T_{0}v$ is $2$-automatic, and $(T_{1}v)(n+m)=\sigma
(v(n)) $ for all integers $n\geqslant 1$, then $v$ is $2$-automatic;

\item[($1_{m}$)] If $T_{1}v$ is $2$-automatic, and $(T_{0}v)(n+m)=\sigma
(v(n)) $ for all integers $n\geqslant 1$, then $v$ is $2$-automatic.
\end{enumerate}
\end{theorem}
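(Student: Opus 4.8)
The plan is to prove both statements simultaneously by induction on $m$, using the characterization that a sequence is $2$-automatic if and only if its $2$-kernel is finite, together with the reduction formula $K_{2}(v)=\{v\}\cup K_{2}(T_{0}v)\cup K_{2}(T_{1}v)$ stated just above. The base case $m=0$ is the easy one: in situation $(0_{0})$ we are told $T_{0}v$ is $2$-automatic and $T_{1}v=\sigma\circ v$; since $\sigma$ is a bijection on the finite alphabet $A$, applying $\sigma$ entrywise sends finite subsets of $A^{\mathbb{N}}$ to finite subsets and commutes with the shift operators $T_{0},T_{1}$, so $K_{2}(T_{1}v)=K_{2}(\sigma\circ v)=\sigma\circ K_{2}(v)$ has the same cardinality as $K_{2}(v)$. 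Feeding this into the reduction formula gives $K_{2}(v)=\{v\}\cup K_{2}(T_{0}v)\cup (\sigma\circ K_{2}(v))$; since $K_{2}(T_{0}v)$ is finite and the last term has the same size as $K_{2}(v)$, a short counting argument forces $K_{2}(v)$ to be finite. Case $(1_{0})$ is symmetric.

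\textbf{The inductive step.} For $m\geqslant 1$, the idea is to peel off one application of $T_{0}$ or $T_{1}$ so as to reduce the shift $n\mapsto n+m$ to a shift by roughly $m/2$, landing back in a hypothesis of the same form with a strictly smaller parameter. Concretely, in case $(0_{m})$ I would split according to the parity of $m$. Writing the hypothesis $(T_{1}v)(n+m)=\sigma(v(n))$ and applying $T_{0}$ and $T_{1}$ to the sequence $T_{1}v$, one gets relations expressing $(T_{0}T_{1}v)$ and $(T_{1}T_{1}v)$, shifted by about $m/2$, in terms of $T_{0}v$ and $T_{1}v$ composed with $\sigma$. Since $T_{0}v$ is already known to be $2$-automatic, and $2$-automaticity is preserved under the operators $T_{0},T_{1}$ (because $K_{2}(T_{i}w)\subseteq K_{2}(w)$) and under the bijection $\sigma$, these relations exhibit $T_{1}v$ as a sequence one of whose halves is $2$-automatic and whose other half equals $\sigma$ applied to a shift of $T_{1}v$ itself — i.e. $T_{1}v$ satisfies a hypothesis of type $(0_{m'})$ or $(1_{m'})$ with $m'=\lceil m/2\rceil$ or $\lfloor m/2\rfloor<m$. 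By the induction hypothesis $T_{1}v$ is $2$-automatic, and then the base-case argument (or directly the reduction formula, now with both $K_{2}(T_{0}v)$ and $K_{2}(T_{1}v)$ finite) shows $v$ is $2$-automatic.

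\textbf{Main obstacle.} The routine but delicate point is the bookkeeping in the inductive step: one must carefully track how the index shift $n+m$ transforms when $n$ is replaced by $2n$ or $2n+1$, and in particular handle the two parities of $m$ separately, making sure the resulting shifted relation is over all sufficiently large $n$ and still has the exact shape required to invoke $(0_{m'})$ or $(1_{m'})$ — the parity of $m$ determines which of the two cases one lands in, and a small prefix of indices may have to be absorbed (which is harmless, since prepending finitely many terms does not affect automaticity, as recalled in the text). I expect this parity case analysis, rather than any conceptual difficulty, to be where the real work lies; the structural facts needed — finiteness of the $2$-kernel characterizes automaticity, $\sigma$ acts as a size-preserving bijection on kernels, and $T_{0},T_{1}$ do not enlarge kernels — are all either stated in the excerpt or immediate.
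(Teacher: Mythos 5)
Your overall strategy --- simultaneous induction on $m$, halving the shift by applying $T_{0}$ and $T_{1}$ to the half of $v$ not yet known to be automatic, and a kernel argument at $m=0$ --- is exactly the paper's. However, two steps, as you state them, do not work. First, the base case: from $T_{1}v=\sigma\circ v$ you correctly get $K_{2}(v)=\{v\}\cup K_{2}(T_{0}v)\cup\sigma(K_{2}(v))$, but no cardinality (``counting'') argument can extract finiteness from this identity: an infinite set can equal a finite set union a bijective image of itself (already $\sigma=\mathrm{id}$ makes the identity vacuous, and even knowing $\sigma$ has finite order, iterating the identity only returns $K_{2}(v)=(\text{finite set})\cup K_{2}(v)$). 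What is actually needed --- and what the paper uses --- is that $\sigma$, being a bijection of the finite alphabet $A$, satisfies $\sigma^{l}=\mathrm{id}_{A}$ for some $l\geqslant 1$, and one then exhibits the explicit finite set $\{\sigma^{j}(v)\,:\,0\leqslant j<l\}\cup\bigcup_{j=0}^{l-1}\sigma^{j}(K_{2}(T_{0}v))$ and checks that it contains $v$ and is stable under $T_{0}$ and $T_{1}$ (using $T_{i}\sigma^{j}=\sigma^{j}T_{i}$ and $T_{1}\sigma^{j}(v)=\sigma^{j+1}(v)$), hence contains $K_{2}(v)$. This closure argument, not a counting argument, is the missing ingredient.

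Second, your inductive step does not strictly decrease at $m=1$. In case $(0_{1})$, applying $T_{0},T_{1}$ to $w=T_{1}v$ gives $T_{1}w=\sigma(T_{0}v)$ (automatic) and $(T_{0}w)(n+1)=\sigma(w(n))$, i.e.\ hypothesis $(1_{1})$ for $w$: the shift is still $1$, not $\lceil 1/2\rceil<1$, so the induction as you set it up does not close. The paper copes with this by proving $m=1$ and $m=2$ as separate base cases (in case $(0_{1})$ it unfolds a second time, via $T_{0}T_{1}v$ and an appeal to $(0_{0})$) and only then runs the halving induction, where the reduction from shift $m+1$ lands at $[\frac{m}{2}]+1\leqslant m$ precisely because $m\geqslant 2$. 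Your reduction is fine for $m\geqslant 2$ (with the parity split and prefix absorption you describe), and the argument is repairable either by adding these small base cases or by a refined well-ordering in which $(1_{1})$ precedes $(0_{1})$; but this is a genuine hole in the induction as proposed, not mere bookkeeping.
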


\begin{proof}
 Since $A$ is finite, there exists an integer $l\geqslant 1$ such that $\sigma ^{l}=\mathrm{id%
}_{A}$, the identity mapping on $A$.

In the following we shall show ($0_{m}$) and ($1_{m}$) by induction on $m$.

\smallskip If $m=0$, then under the condition of ($0_{0}$), we have $T_{1}v=\sigma(v)$ and
\begin{equation*}
K_{2}(v)=\{\sigma ^{j}(v)\,|\,0\leqslant j<l\}\cup
\bigcup_{j=0}^{l-1}\sigma ^{j}(K_{2}(T_{0}v)).
\end{equation*}%
Thus $K_{2}(v)$ is finite since $T_{0}v$ is $2$-automatic.

Similarly, under the condition in ($1_{0}$), we have $T_{0}v=\sigma(v)$ and%
\begin{equation*}
K_{2}(v)=\{\sigma ^{j}(v)\,|\,0\leqslant j<l\}\cup
\bigcup_{j=0}^{l-1}\sigma ^{j}(K_{2}(T_{1}v)).
\end{equation*}%
Thus $K_{2}(v)$ is finite since $T_{1}v$ is $2$-automatic.
\smallskip

If $m=1$, then we distinguish two cases below:

\textbf{Case (}$0_{1}$\textbf{):} $T_{0}v$ is $2$-automatic, and $%
(T_{1}v)(n+1)=\sigma (v(n))$ for all integers $n\geqslant 1$. Hence $%
T_{1}T_{1}v=\sigma (T_{0}v)$, and for all integers $n\geqslant 1$, we have
\begin{equation*}
(T_{0}T_{1}v)(n+1)=(T_{1}v)(2n+2)=\sigma (v(2n+1))=\sigma ((T_{1}v)(n)).
\end{equation*}%
Thus $T_{1}T_{0}T_{1}v=\sigma (T_{0}T_{1}v)$, and for all integers $%
n\geqslant 1$, we have
\begin{eqnarray*}
(T_{0}T_{0}T_{1}v)(n+1)&=&(T_{0}T_{1}v)(2n+2)=\sigma ((T_{1}v)(2n+1))\\
&=&\sigma^{2}(v(2n))=\sigma ^{2}((T_{0}v)(n)).
\end{eqnarray*}%
So $T_{0}T_{0}T_{1}v$ is $2$-automatic since it is obtained from $\sigma
^{2}(T_{0}v)$ by adding a letter before, and the latter is $2$-automatic,
for $T_{0}v$ is. Set $w=T_0T_1v$. Then $T_0w$ is $2$-automatic, and $T_1w=\sigma(w)$.
Thus by ($0_{0}$), we obtain that $T_0T_1v$ is $2$-automatic.
But $T_1T_1v=\sigma(T_0v)$ is also $2$-automatic, consequently $T_1v$ is $2$-automatic,
and then $v$ is $2$-automatic, for both $T_0v$ and $T_1v$ are $2$-automatic.

\textbf{Case (}$1_{1}$\textbf{):} $T_{1}v$ is $2$-automatic, and $%
(T_{0}v)(n+1)=\sigma (v(n))$ for all integers $n\geqslant 1$. Hence $%
T_{1}T_{0}v=\sigma (T_{0}v)$, and for all integers $n\geqslant 1$, we have
\begin{equation*}
(T_{0}T_{0}v)(n+1)=(T_{0}v)(2n+2)=\sigma (v(2n+1))=\sigma ((T_{1}v)(n)).
\end{equation*}%
So $T_{0}T_{0}v$ is $2$-automatic for it is obtained from $\sigma (T_{1}v)$
by adding a letter before. Set $w=T_{0}v$. Then $T_{1}w=\sigma (w)$ and $%
T_{0}w$ is $2$-automatic. Thus by ($0_{0}$), $w=T_{0}v$ is $2$-automatic,
and then $v$ is $2$-automatic since both $T_{0}v$ and $T_{1}v$ are $2$-automatic.
\smallskip

If $m=2$, then we distinguish again two cases below:

\textbf{Case (}$0_{2}$\textbf{):} $T_{0}v$ is $2$-automatic, and $%
(T_{1}v)(n+2)=\sigma (v(n))$ for all integers $n\geqslant 1$. Then for all
integers $n\geqslant 1$, we have
\begin{eqnarray*}
(T_{0}T_{1}v)(n+1) &=&(T_{1}v)(2n+2)=\sigma (v(2n))=\sigma ((T_{0}v)(n)), \\
(T_{1}T_{1}v)(n+1) &=&(T_{1}v)(2n+3)=\sigma (v(2n+1))=\sigma ((T_{1}v)(n)).
\end{eqnarray*}%
Hence $T_{0}T_{1}v$ is $2$-automatic for it is obtained from $\sigma
(T_{0}v) $ by adding a letter before. Put $w=T_{1}v$. Then $T_{0}w$ is $2$%
-automatic, and for all integers $n\geqslant 1$, we have $%
(T_{1}w)(n+1)=\sigma (w(n))$. Hence by ($0_{1}$), we obtain that $w=T_{1}v$
is $2$-automatic, and then $v$ is $2$-automatic since both $T_{0}v$ and $%
T_{1}v$ are $2$-automatic.

\textbf{Case (}$1_{2}$\textbf{):} $T_{1}v$ is $2$-automatic, and $%
(T_{0}v)(n+2)=\sigma (v(n))$ for all integers $n\geqslant 1$. Then for all integers
$n\geqslant 1$, we have
\begin{eqnarray*}
(T_{0}T_{0}v)(n+1) &=&(T_{0}v)(2n+2)=\sigma (v(2n))=\sigma ((T_{0}v)(n)), \\
(T_{1}T_{0}v)(n+1) &=&(T_{0}v)(2n+3)=\sigma (v(2n+1))=\sigma ((T_{1}v)(n)).
\end{eqnarray*}%
So $T_{1}T_{0}v$ is $2$-automatic for it is obtained from $\sigma (T_{1}v)$
by adding a letter before. Set $w=T_{0}v$. Then $T_{1}w$ is $2$-automatic,
and for all integers $n\geqslant 1$, we have $(T_{0}w)(n+1)=\sigma (w(n))$.
Hence by ($1_{1}$), we obtain that $w=T_{0}v$ is $2$-automatic, and then $v$
is $2$-automatic since both $T_{0}v$ and $T_{1}v$ are $2$-automatic.
\smallskip 

Now let $m\geqslant 2$ be an integer, and assume that both ($%
0_{j} $) and ($1_{j}$) hold for $0\leqslant j\leqslant m$. Then $[\frac{m}{2}%
]+1\leqslant m$. We shall show that both ($0_{m+1}$) and ($1_{m+1}$) hold.
For this, we distinguish two cases below:
\smallskip 

\textbf{Case (}$0_{m+1}$\textbf{):} $T_{0}v$ is $2$-automatic,
and $(T_{1}v)(n+m+1)=\sigma (v(n))$ for all integers $n\geqslant 0$. We
distinguish two cases again:

\textbf{Case 1:} $m$ is odd. Then for all integers $n\geqslant 1$, we have%
\begin{eqnarray*}
(T_{0}T_{1}v)(n+[\frac{m}{2}]+1) &=&(T_{1}v)(2n+m+1)=\sigma (v(2n))=\sigma
((T_{0}v)(n)), \\
(T_{1}T_{1}v)(n+[\frac{m}{2}]+1) &=&(T_{1}v)(2n+m+2)=\sigma (v(2n+1))=\sigma
((T_{1}v)(n)).
\end{eqnarray*}%
Hence $T_{0}T_{1}v$ is $2$-automatic for it is obtained from $\sigma
(T_{0}v) $ by adding a prefix of length $[\frac{m}{2}]+1$. Put $w=T_{1}v$.
Then $T_{0}w$ is $2$-automatic, and for all integers $n\geqslant 1$,%
\begin{equation*}
(T_{1}w)(n+[\frac{m}{2}]+1)=\sigma (w(n)).
\end{equation*}
By applying ($0_{[\frac{m}{2}]+1}$) with $w$, we obtain at once that $w$ is $2$%
-automatic, and then $v$ is $2$-automatic since both $T_{0}v$ and $T_{1}v$
are $2$-automatic.

\textbf{Case 2:} $m$ is even. Then for all integers $n\geqslant 1$, we have
\begin{eqnarray*}
(T_{0}T_{1}v)(n+[\frac{m}{2}]+1) &=&(T_{1}v)(2n+m+2)=\sigma (v(2n+1))=\sigma
((T_{1}v)(n)), \\
(T_{1}T_{1}v)(n+[\frac{m}{2}]) &=&(T_{1}v)(2n+m+1)=\sigma (v(2n))=\sigma
((T_{0}v)(n)).
\end{eqnarray*}%
So $T_{1}T_{1}v$ is $2$-automatic for it is obtained from $\sigma (T_{0}v)$
by adding a prefix of length $[\frac{m}{2}]$. Put $w=T_{1}v$. Then $T_{1}w$
is $2$-automatic, and $(T_{0}w)(n+[\frac{m}{2}]+1)=\sigma (w(n))$, for all
integers $n\geqslant 1$. By applying ($1_{[\frac{m}{2}]+1}$) with $w$, we
obtain that $w$ is $2$-automatic, and then $v$ is $2$-automatic since both $%
T_{0}v$ and $T_{1}v$ are $2$-automatic.

\smallskip \textbf{Case (}$1_{m+1}$\textbf{):} $T_{1}v$ is $2$-automatic,
and $(T_{0}v)(n+m+1)=\sigma (v(n))$ for all integers $n\geqslant 0$. We
distinguish two cases again:

\textbf{Case 1:} $m$ is odd. Then for all integers $n\geqslant 1$, we have%
\begin{eqnarray*}
(T_{0}T_{0}v)(n+[\frac{m}{2}]+1) &=&(T_{0}v)(2n+m+1)=\sigma (v(2n))=\sigma
((T_{0}v)(n)), \\
(T_{1}T_{0}v)(n+[\frac{m}{2}]+1) &=&(T_{0}v)(2n+m+2)=\sigma (v(2n+1))=\sigma
((T_{1}v)(n)).
\end{eqnarray*}%
Hence $T_{1}T_{0}v$ is $2$-automatic for it is obtained from $\sigma
(T_{1}v) $ by adding a prefix of length $[\frac{m}{2}]+1$. Put $w=T_{0}v$.
Then $T_{1}w$ is $2$-automatic, and for all integers $n\geqslant 1$,%
\begin{equation*}
(T_{0}w)(n+[\frac{m}{2}]+1)=\sigma (w(n)).
\end{equation*}%
By applying ($1_{[\frac{m}{2}]+1}$) with $w$, we obtain immediately that $w$ is $2$%
-automatic, and then $v$ is $2$-automatic since both $T_{0}v$ and $T_{1}v$
are $2$-automatic.

\textbf{Case 2:} $m$ is even. Then for all integers $n\geqslant 1$, we have
\begin{eqnarray*}
(T_{0}T_{0}v)(n+[\frac{m}{2}]+1) &=&(T_{0}v)(2n+m+2)=\sigma (v(2n+1))=\sigma
((T_{1}v)(n)), \\
(T_{1}T_{0}v)(n+[\frac{m}{2}]) &=&(T_{0}v)(2n+m+1)=\sigma (v(2n))=\sigma
((T_{0}v)(n)).
\end{eqnarray*}%
So $T_{0}T_{0}v$ is $2$-automatic for it is obtained from $\sigma (T_{1}v)$
by adding a prefix of length $[\frac{m}{2}]+1$. Put $w=T_{0}v$. Then $T_{0}w$
is $2$-automatic, and $(T_{1}w)(n+[\frac{m}{2}])=\sigma (w(n))$, for all
integers $n\geqslant 1$. By applying ($0_{[\frac{m}{2}]}$) with $w$, we
obtain that $w$ is $2$-automatic, and then $v$ is $2$-automatic since both $%
T_{0}v$ and $T_{1}v$ are $2$-automatic.

Finally we conclude that both ($0_{m}$) and ($1_{m}$) hold for all integers $%
m\geqslant 0$.
\end{proof}

We can now prove that the sequences, associated with the elements of $\mathcal{F}2$
and $\mathcal{F}3$ and described at the end of the previous section
are 2-automatic. This follows from the more general result stated below,
which is a direct application of Theorem~2.

\begin{theorem}
\label{thm2}Let $p$ be a prime number, $s\geqslant 1$ an integer, and $%
q=p^{s}$. Denote by $\mathbb{F}_{q}$ the finite field in $q$ elements. Set $%
r=p^{t}$, with $t\geqslant 0$ an integer. Fix $\alpha ,\beta ,\gamma ,\delta
\in \mathbb{F}_{q}^{\ast }$, and $\ell \geqslant 1$ an integer. Let $%
u=(u(n))_{n\geqslant 1}$ be a sequence in $\mathbb{F}_{q}^{\ast }$ such that for all
integers $n\geqslant 0$, we have%
\begin{equation}
\left\{
\begin{array}{l}
u(\ell +4n+1)=\alpha (u(2n+1))^{r},\text{ }u(\ell +4n+2)=\beta , \\
u(\ell +4n+3)=\gamma (u(2n+2))^{r},\text{ }u(\ell +4n+4)=\delta .%
\end{array}%
\right. \text{ }  \label{eq2}
\end{equation}%
Then the sequence $u$ is $2$-automatic.
\end{theorem}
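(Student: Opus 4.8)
The plan is to derive the theorem from Theorem~\ref{thm3} after two preparatory reductions: rewriting the four-line recursion \eqref{eq2} in a ``halved'' form, and replacing $u$ by a twisted sequence $v$ in which the two scalars $\alpha,\gamma$ merge into a single bijection.

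First I would observe that, since $r=p^{t}$, the map $x\mapsto x^{r}$ is an iterate of the Frobenius and hence an automorphism of $\mathbb{F}_{q}$; consequently $\sigma_{0}\colon x\mapsto\alpha x^{r}$ and $\sigma_{1}\colon x\mapsto\gamma x^{r}$ are bijections of the finite alphabet $A:=\mathbb{F}_{q}^{\ast}$. Next, noting that the pair $(\ell+4n+1,\ \ell+4n+3)_{n\geqslant 0}$ enumerates the integers $\ell+2k-1$ with $k=2n+1$ odd, respectively $k=2n+2$ even (and that $u(2n+1)=u(k)$, $u(2n+2)=u(k)$ accordingly), and similarly for $(\ell+4n+2,\ \ell+4n+4)_{n\geqslant 0}$, one checks that \eqref{eq2} is equivalent to the single recursion
\[
u(\ell+2k-1)=\varepsilon_{k}\,(u(k))^{r},\qquad u(\ell+2k)=\eta_{k}\qquad(k\geqslant 1),
\]
where $\varepsilon_{k}=\alpha,\ \eta_{k}=\beta$ for $k$ odd and $\varepsilon_{k}=\gamma,\ \eta_{k}=\delta$ for $k$ even. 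Writing $\phi_{k}=\sigma_{0}$ for $k$ odd and $\phi_{k}=\sigma_{1}$ for $k$ even, so that $\varepsilon_{k}(u(k))^{r}=\phi_{k}(u(k))$, I define $v=(v(n))_{n\geqslant 1}$ in $A$ by $v(n)=\phi_{n}(u(n))$. Since $\phi_{n}$ depends only on the parity of $n$, for $i\geqslant 1$ and $0\leqslant j<2^{i}$ the sequence $(u(2^{i}n+j))_{n\geqslant 1}$ is obtained from $(v(2^{i}n+j))_{n\geqslant 1}\in K_{2}(v)$ by applying termwise the fixed bijection $\phi_{j}^{-1}$; hence $K_{2}(u)\subseteq\{u\}\cup\sigma_{0}^{-1}(K_{2}(v))\cup\sigma_{1}^{-1}(K_{2}(v))$, and by the kernel characterization it suffices to prove that $v$ is $2$-automatic.

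To prove that, I would split according to the parity of $\ell$, the point being that the constant entries $\eta_{k}$ sit in the even part of $u$ when $\ell$ is even and in the odd part when $\ell$ is odd. Assume first $\ell$ even and set $m:=\ell/2-1\geqslant 0$. Using the halved recursion at the positions $2n$ and $2(n+m)+1=\ell+2n-1$ one gets, for all $n\geqslant 1$,
\[
(T_{0}v)(n)=\sigma_{1}(u(2n)),\qquad (T_{1}v)(n+m)=\sigma_{0}(u(\ell+2n-1))=\sigma_{0}(v(n)).
\]
For $n>\ell/2$ one has $u(2n)=\eta_{n-\ell/2}$, so $T_{0}v$ is ultimately periodic, hence $2$-automatic; together with the second identity this is precisely the hypothesis of case $(0_{m})$ of Theorem~\ref{thm3} with $\sigma=\sigma_{0}$, so $v$ is $2$-automatic. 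If $\ell$ is odd, set $m:=(\ell-1)/2\geqslant 0$; the same computation with $T_{0}$ and $T_{1}$ interchanged shows that $T_{1}v$ is ultimately periodic (since $u(2n+1)=\eta_{n-m}$ for $n>m$) and $(T_{0}v)(n+m)=\sigma_{1}(v(n))$ for all $n\geqslant 1$, which is the hypothesis of case $(1_{m})$ with $\sigma=\sigma_{1}$; again $v$ is $2$-automatic. In either case $v$, and hence $u$, is $2$-automatic.

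The step I expect to require the most care is the verification that the two displayed identities hold for every $n\geqslant 1$, and not merely for large $n$: this is exactly what pins down the precise value $m=\ell/2-1$ (resp. $(\ell-1)/2$), since it forces $2(n+m)+1=\ell+2n-1>\ell$ already at $n=1$, so that the recursion for $u$ genuinely applies there (and similarly for the even-index identity). Everything else is routine book-keeping: the reduction from $u$ to $v$, the fact that ultimately periodic sequences are $2$-automatic, and the stability of $2$-automaticity under applying a fixed map termwise are all recalled in the text, while the genuine combinatorial content is contained entirely in Theorem~\ref{thm3}.
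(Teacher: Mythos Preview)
Your argument is correct. The halved recursion $u(\ell+2k-1)=\phi_k(u(k))$ together with the twist $v(n)=\phi_n(u(n))$ indeed collapses the parity-dependent scalars $\alpha,\gamma$ into the single identity $u(\ell+2k-1)=v(k)$, and your index bookkeeping ($2(n+m)+1=\ell+2n-1$ for $\ell$ even, $2(n+m)=\ell+2n-1$ for $\ell$ odd) places you exactly in cases $(0_m)$ and $(1_m)$ of Theorem~\ref{thm3} from $n=1$ onward. The reduction from $K_2(u)$ to $K_2(v)$ via the parity-dependent bijections $\phi_j^{-1}$ is also sound.

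The paper proceeds differently: it works directly with the two subsequences $u_0(n)=u(2n)$ and $u_1(n)=u(2n+1)$ and distinguishes \emph{four} cases according to $\ell\bmod 4$. In each case one of $u_0,u_1$ has both $T_0$ and $T_1$ ultimately constant (hence is ultimately periodic), while the other has one $T_i$ ultimately periodic and the remaining $T_i$ of the form $\sigma_y(u_j)$ up to a shift, so Theorem~\ref{thm3} applies with the bijection $\sigma_y(x)=yx^r$ (where $y\in\{\alpha,\gamma\}$ depends on the case). Your twist $v$ is what lets you cut the case analysis from four to two: it absorbs the $\alpha$-vs-$\gamma$ alternation that the paper handles by the extra split modulo~$4$. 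Both routes rest on Theorem~\ref{thm3}; yours packages the reduction a bit more economically at the cost of the auxiliary sequence~$v$.
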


\begin{proof}
For all $x,y\in \mathbb{F}_{q}^{\ast }$,
we put $\sigma_y (x)=yx^{r}$. Thence $\sigma_y$ is a bijection on $\mathbb{F}_{q}^{\ast }$.
For all integers $n\geqslant 1$, set $u_{0}(n)=u(2n)$ and $u_{1}(n)=u(2n+1)$,
and we need only show that both $u_{0}$ and $u_{1}$ are $2$-automatic. For
this, we distinguish below four cases.
\smallskip

\textbf{Case I:} $\ell =4m+1$, with $m\geqslant 0$ an integer. Then for all
integers $n\geqslant 0$, from Formula (\ref{eq2}), we deduce%
\begin{eqnarray*}
(T_{0}u_{1})(n+m+1) &=&u(4(n+m+1)+1)=u(\ell +4n+4)=\delta , \\
(T_{1}u_{1})(n+m) &=&u(4n+4m+3)=u(\ell +4n+2)=\beta .
\end{eqnarray*}%
Since both $T_{0}u_{1}$ and $T_{1}u_{1}$ are ultimately constant, then $%
u_{1} $ is ultimately periodic, and thus $2$-automatic.

Similarly, for all integers $n\geqslant 0$, we also have%
\begin{eqnarray*}
(T_{0}u_{0})(n+m+1) &=&u(4(n+m+1))=u(\ell +4n+3) \\
&=&\gamma (u(2n+2))^{r}=\gamma (u_{0}(n+1))^{r}, \\
(T_{1}u_{0})(n+m) &=&u(4n+4m+2)=u(\ell +4n+1) \\
&=&\alpha (u(2n+1))^{r}=\alpha (u_{1}(n))^{r}.
\end{eqnarray*}%
So $T_{1}u_{0}$ is ultimately periodic as $u_1$, and $(T_{0}u_{0})(n+m)=\sigma_{\gamma}(u_{0}(n))$ for all integers $n\geqslant 1$.
Then by Theorem \ref{thm3}, we
obtain that $u_{0}$ is $2$-automatic.
\smallskip

\textbf{Case II:} $\ell =4m+2$, with $m\geqslant 0$ an integer. Then for all
integers $n\geqslant 0$, from Formula (\ref{eq2}), we deduce%
\begin{eqnarray*}
(T_{0}u_{0})(n+m+1) &=&u(4(n+m+1))=u(\ell +4n+2)=\beta , \\
(T_{1}u_{0})(n+m+1) &=&u(4n+4m+6)=u(\ell +4n+4)=\delta .
\end{eqnarray*}%
So $u_{0}$ is ultimately periodic, and thus $2$-automatic.

Similarly, for all integers $n\geqslant 0$, we also have%
\begin{eqnarray*}
(T_{0}u_{1})(n+m+1) &=&u(\ell +4n+3)=\gamma (u(2n+2))^{r}=\gamma
(u_{0}(n+1))^{r}, \\
(T_{1}u_{1})(n+m) &=&u(\ell +4n+1)=\alpha (u(2n+1))^{r}=\alpha
(u_{1}(n))^{r}.
\end{eqnarray*}%
Hence $T_{0}u_{1}$ is ultimately periodic as $u_0$, and $(T_{1}u_{1})(n+m)=\sigma_\alpha
(u_{1}(n))$ for all integers $n\geqslant 1$. Then by Theorem \ref{thm3}, we
obtain that $u_{1}$ is $2$-automatic.
\smallskip

\textbf{Case III:} $\ell =4m+3$, with $m\geqslant 0$ an integer. Then for
all integers $n\geqslant 0$, from Formula (\ref{eq2}), we deduce%
\begin{eqnarray*}
(T_{0}u_{1})(n+m+1) &=&u(4(n+m+1)+1)=u(\ell +4n+2)=\beta , \\
(T_{1}u_{1})(n+m+1) &=&u(4n+4m+7)=u(\ell +4n+4)=\delta .
\end{eqnarray*}%
So $u_{1}$ is ultimately periodic, and thus $2$-automatic.

Similarly, for all integers $n\geqslant 0$, we also have%
\begin{eqnarray*}
(T_{0}u_{0})(n+m+1) &=&u(4(n+m+1))=u(\ell +4n+1) \\
&=&\alpha (u(2n+1))^{r}=\alpha (u_{1}(n))^{r}, \\
(T_{1}u_{0})(n+m+1) &=&u(4n+4m+6)=u(\ell +4n+3) \\
&=&\gamma (u(2n+2))^{r}=\gamma (u_{0}(n+1))^{r}.
\end{eqnarray*}%
Thus $T_{0}u_{0}$ is ultimately periodic as $u_1$, and $(T_{1}u_{0})(n+m)=\sigma_\gamma
(u_{0}(n))$ for all integers $n\geqslant 1$. Then by Theorem \ref{thm3}, we
obtain that $u_{0}$ is $2$-automatic.
\smallskip

\textbf{Case IV:} $\ell =4m+4$, with $m\geqslant 0$ an integer. Then for all
integers $n\geqslant 0$, from Formula (\ref{eq2}), we deduce%
\begin{eqnarray*}
(T_{0}u_{0})(n+m+2) &=&u(4(n+m+2))=u(\ell +4n+4)=\delta , \\
(T_{1}u_{0})(n+m+1) &=&u(4n+4m+6)=u(\ell +4n+2)=\beta .
\end{eqnarray*}%
So $u_{0}$ is ultimately periodic, and thus $2$-automatic.

Similarly, for all integers $n\geqslant 0$, we also have%
\begin{eqnarray*}
(T_{0}u_{1})(n+m+1) &=&u(\ell +4n+1)=\alpha (u(2n+1))^{r}=\alpha
(u_{1}(n))^{r}, \\
(T_{1}u_{1})(n+m+1) &=&u(\ell +4n+3)=\gamma (u(2n+2))^{r}=\gamma
(u_{0}(n+1))^{r}.
\end{eqnarray*}%
Hence $T_{1}u_{1}$ is ultimately periodic as $u_0$, and $(T_{0}u_{1})(n+m+1)=\sigma_\alpha
(u_{1}(n))$ for all integers $n\geqslant 1$. Then by Theorem \ref{thm3}, we
obtain that $u_{1}$ is $2$-automatic.
\end{proof}

\section{A substitutive but not automatic sequence}

In this section we are concerned with the following question: is it a
specificity of (certain) hyperquadratic continued fractions to generate, in
the way that we have described above, an automatic sequence?
To such a wide question, we will only give a very partial answer, by considering a last example. As we remarked in the introduction, the possibility of describing explicitly the
continued fraction expansion for an algebraic power series, which is not
hyperquadratic, appears to be remote. However, a particular example, which
was introduced by chance in \cite{MR}, does exist. This example is the object of the theorem below.
\medskip

First, we recall notions on substitutive sequences (see for example \cite{AS}).
\medskip

Let $A$ be an alphabet with $A=\lbrace a_1,a_2,...,a_N\rbrace$. A substitution on $A$ is a morphism $\sigma :$ $%
A^{\ast }\rightarrow A^{\ast }$. With the morphism $\sigma $, there is
associated a matrix $M_{\sigma }=(m_{i,j})_{1\leqslant i,j\leqslant N}$, where $m_{i,j}$ is
the number of occurrences of $a_i$ in the word $\sigma (a_j)$. Since $M_{\sigma
} $ is a non-negative integer square matrix, by the famous Frobenius-Perron theorem
(see for example \cite{G}), $M_{\sigma }$ has a real eigenvalue $\alpha $,
called the dominating eigenvalue of $M_{\sigma }$, which is an algebraic
integer and greater than or equal to the modulus of any other eigenvalue,
thus a Perron number. If there exists a letter $a\in A$ such that $\sigma
(a)=ax$ for some $x\in A^{\ast }\setminus \{\emptyset \}$, and $%
\lim_{n\rightarrow \infty }|\sigma ^{n}(a)|=+\infty $, then $\sigma $ is
said to be prolongable on $a$. Since for all integers $n\geqslant 0$, $%
\sigma ^{n}(a)$ is a prefix of $\sigma ^{n+1}(a)$, and $|\sigma ^{n}(a)|$
tends to infinity with $n$, the sequence $(\sigma ^{n}(a))_{n\geqslant 0}$
converges, and we denote its limits by $\sigma ^{\infty }(a)$. The latter is
the unique infinite fixed point of $\sigma $ starting with $a$. Let $o$ be a
mapping defined on $A$, extended pointwisely over $A^{\ast }\cup A^{\mathbb{N%
}}$. We put $v=o(\sigma ^{\infty }(a))$, and call it an $\alpha $-substitutive
sequence.

We have the following important characterization for automatic sequences 
in terms of substitutive sequences, due to  Cobham \cite{C1}: 
a sequence $v=(v(n))_{n\geqslant 1}$ is $k$-automatic if and only if $v$ is a substitutive sequence
with $\sigma$ such that $|\sigma (c)|=k$, for all $%
c\in A$. Note that in this case $v$ is $k$-substitutive.

Now let $\alpha ,\beta $ be two multiplicatively independent Perron numbers.
By generalizing another classical theorem of Cobham \cite{C2}, Durand has finally
shown in \cite[Theorem 1, p.1801]{D} the remarkable result that a sequence
is both $\alpha $-substitutive and $\beta $-substitutive if and only if it
is ultimately periodic.
\smallskip

We can now state and prove the following theorem.

\begin{theorem}
The algebraic equation $X^{4}+X^{2}-TX+1=0$ has a unique root $\alpha $ in $%
\mathbb{F}(3)$. Let $\alpha =[0,a_{1},a_{2},\ldots ,a_{n},\ldots ]$ be its
continued fraction expansion and $u(n)$ be the leading coefficient of $a_{n}$
for all integers $n\geqslant 1$. The sequence $W=(u(n))_{n\geqslant 1}$ is the limit of the sequence $(W_{n})_{n\geqslant 0}$ of finite
words over the alphabet $\{{1,2\}}$, defined recursively as follows:
\begin{equation}
W_{0}=\emptyset ,\text{ }W_{1}=1,\text{ and }%
W_{n}=W_{n-1},2,W_{n-2},2,W_{n-1},\text{ for all integers }n\geqslant 2,
\label{eq3}
\end{equation}%
where commas indicate here concatenation of words.
Then $\alpha $ is not hyperquadratic, and
the sequence $W=(u(n))_{n\geqslant 1}$ is substitutive but not automatic.
\end{theorem}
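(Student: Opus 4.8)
The plan is to break the statement into three essentially independent parts: (i) the existence and uniqueness of the root $\alpha \in \mathbb{F}(3)$ of the quartic, together with the recursive description of the continued fraction and hence of the word $W$; (ii) the fact that $W$ is substitutive; and (iii) the fact that $\alpha$ is not hyperquadratic and that $W$ is not automatic.

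For part (i), I would first locate $\alpha$ by a standard valuation/Newton-polygon argument in $\mathbb{F}(3)$: looking at $X^4+X^2-TX+1=0$, the only way to balance the dominant term $-TX$ against the others forces $\alpha$ to have a pole of order $0$ (i.e.\ $\alpha = 1/T + \cdots$ after inverting), and a successive-approximation (Hensel-type) argument gives a unique solution with $v(\alpha)>0$, so that $\alpha = [0,a_1,a_2,\dots]$. To obtain the recursion \eqref{eq3} for $W$, I would reproduce (or cite from \cite{MR}) the continued fraction algorithm applied to this specific quartic: the algebraic equation is used to relate $\alpha_n$ to $\alpha$ raised to powers of $3$ in a way that, while $\alpha$ is \emph{not} hyperquadratic, still yields a self-similar recursion on the partial quotients $a_n$; passing to leading coefficients collapses this to the word recursion $W_n = W_{n-1},2,W_{n-2},2,W_{n-1}$ over $\{1,2\}$ (note $-1 = 2$ in $\mathbb{F}_3$). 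The key check is that $W_{n-1}$ is a prefix of $W_n$ for all $n$, which is immediate from the recursion, so the limit $W$ exists and is well defined.

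For part (ii), substitutivity, I would exhibit $W$ as $o(\sigma^\infty(a))$ for a suitable substitution. Because the recursion is a ``Fibonacci-like'' doubling ($|W_n| = 2|W_{n-1}| + |W_{n-2}| + 2$), the natural thing is to introduce an alphabet whose letters encode the blocks $W_{n-1}$, $W_{n-2}$ and the separators, and build a morphism $\sigma$ for which the sequence of iterates $\sigma^n(a)$ reproduces $(W_n)$ up to the final coding $o$ onto $\{1,2\}$; one must verify $\sigma$ is prolongable on $a$ and that $|\sigma^n(a)|\to\infty$, which follows since the associated incidence matrix $M_\sigma$ has dominating eigenvalue $>1$ (here the relevant Perron number is the real root of $x^2 = 2x+1$, i.e.\ $1+\sqrt2$). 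This realizes $W$ as an $\alpha_0$-substitutive sequence with $\alpha_0 = 1+\sqrt2$.

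For part (iii), I would argue by contradiction in two stages. First, if $\alpha$ were hyperquadratic it would be algebraic of degree $r+1$ for some $r=3^t$; since $\alpha$ is a root of an irreducible quartic (irreducibility of $X^4+X^2-TX+1$ over $\mathbb{F}_3(T)$ should be checked directly — e.g.\ no linear factors since it has no root in $\mathbb{F}_3(T)$, and ruling out a product of two quadratics by comparing coefficients), the only possibility is $r+1=4$, i.e.\ $r=3$, and one then derives a contradiction with the explicit shape of the continued fraction (a hyperquadratic cubic-type relation $\alpha^3 = P\alpha_{\ell+1}+Q$ would force the degree sequence, or the word $W$, into the rigid pattern of Section~2, incompatible with the $1+\sqrt2$ self-similarity). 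The cleaner route, and the one I expect to be the crux, is to show $W$ is \emph{not} $k$-automatic for any $k$: by Cobham's characterization an automatic $W$ would be $k$-substitutive, hence $k$-Perron-substitutive for an integer $k$, while we have shown $W$ is $(1+\sqrt2)$-substitutive; since $1+\sqrt2$ and any integer $k\geqslant 2$ are multiplicatively independent Perron numbers, Durand's theorem (\cite[Theorem~1]{D}) forces $W$ to be ultimately periodic. So it remains to rule out ultimate periodicity of $W$ — e.g.\ by noting that the gaps between consecutive occurrences of the letter $2$ (governed by the nested block structure, with blocks $W_n$ of unbounded length consisting mostly of $1$'s) are unbounded, so $W$ cannot be eventually periodic. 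This also re-proves non-automaticity directly and, combined with the fact that an ultimately periodic power series is rational while $\alpha$ is not, confirms $\alpha\notin\mathcal H(3)$; the main obstacle throughout is carefully extracting the word recursion \eqref{eq3} from the quartic via the continued fraction algorithm, which is the one genuinely computational step.
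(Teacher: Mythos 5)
Your parts (i) and (ii) follow essentially the paper's route (the paper does not re-derive the continued fraction either: it cites Mills--Robbins \cite{MR}, Buck--Robbins \cite{BR} and \cite{L1} for the recursion $\Omega_n=\Omega_{n-1},2T,\Omega_{n-2}^{(3)},2T,\Omega_{n-1}$, the collapse to $\{1,2\}$ coming from $x^3=x$ in $\mathbb{F}_3^{\ast}$; and the substitution you only gesture at is exhibited explicitly as $\sigma(a)=abca$, $\sigma(b)=ca$, $\sigma(c)=c$, $o(a)=1$, $o(b)=o(c)=2$, with dominating eigenvalue $1+\sqrt{2}$). The genuine gaps are in part (iii). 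First, your argument for non-hyperquadraticity does not work: hyperquadraticity only gives a dependence of $\alpha^{r+1},\alpha^{r},\alpha,1$, hence algebraic degree $\leqslant r+1$, so a quartic element is a priori compatible with every $r=3^t\geqslant 3$ and you cannot conclude $r=3$; moreover the ``rigid patterns of Section 2'' concern only the special families $\mathcal{F}1$--$\mathcal{F}3$ (particular choices of $(P,Q)$), not arbitrary hyperquadratic elements, so no contradiction follows from them. Worse, your ``cleaner route'' is circular: non-automaticity of $W$ cannot confirm $\alpha\notin\mathcal{H}(3)$, since whether every hyperquadratic element produces an automatic leading-coefficient sequence is exactly the open question of the paper; and ultimate periodicity of $W$ has nothing to do with rationality of $\alpha$ ($W$ records leading coefficients of partial quotients, not digits). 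The paper's actual proof is different: from the explicit expansion the irrationality measure of $\alpha$ equals $2$ while its partial quotients are unbounded, and Voloch's theorem \cite{V} (see \cite{L1}) shows a hyperquadratic element with unbounded partial quotients has irrationality measure strictly greater than $2$.

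Second, your proposed proof that $W$ is not ultimately periodic rests on a false structural claim. The word $W$ is not ``mostly $1$'s'' and the gaps between consecutive occurrences of $2$ are bounded: by induction each $W_k$ with $k\geqslant 1$ begins and ends with $1$ and contains no factor $11$ (the recursion only inserts $2$'s at the junctions), so between two consecutive $2$'s there is at most one letter; in fact the frequency of $2$ tends to $2-\sqrt{2}>1/2$. The paper closes this step by precisely that frequency computation: with $l_n=|W_n|$ and $m_n$ the number of $2$'s in $W_n$, both satisfy the recursion $x_n=2x_{n-1}+x_{n-2}+2$, whence $m_n/l_n\to 2-\sqrt{2}$, an irrational number, contradicting the rational letter frequencies of an ultimately periodic word. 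You would need to replace your gap argument by this (or some equivalent) computation, and to replace your hyperquadraticity discussion by the irrationality-measure argument, for the proof to be complete.
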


\begin{proof}
The existence in $\mathbb{F}(p)$ of the root of the quartic equation stated
in this theorem was observed firstly by Mills and Robbins in \cite{MR}, for
all prime numbers $p$. For $p=3$, in the same work, a conjecture on its
continued fraction expansion, based on computer observation, was given. Buck
and Robbins established this conjecture in \cite{BR}. Shortly after another
proof of this conjecture was given in \cite{L1}. We have $\alpha
=[0,a_{1},a_{2},\ldots ,a_{n},\ldots ]$ and the sequence of polynomials $%
(a_{n})_{n\geqslant 1}$ is obtained as the limit of a sequence of finite
words $(\Omega _{n})_{n\geqslant 0}$ with letters in $\mathbb{F}_{3}[T]$,
defined by:
\begin{equation}
\Omega _{0}=\emptyset ,\ \Omega _{1}=T,\ \text{and}\ \Omega _{n}=\Omega
_{n-1},2T,\Omega _{n-2}^{(3)},2T,\Omega _{n-1},  \label{eq4}
\end{equation}%
where commas indicate concatenation of words, and $\Omega _{n-2}^{(3)}$
denote the word obtained by cubing each letter of $\Omega _{n-2}$. Since $%
x^{3}=x$ for all $x$ in $\mathbb{F}_{3}^{\ast }$, we obtain immediately for $W$ the
desired formulas (\ref{eq3}) from (\ref{eq4}).

The fact that $\alpha $ is not hyperquadratic was proved in \cite{L1} (see
the remark after Theorem A, p.209). Indeed the knowledge of the continued
fraction allows to show that the irrationality measure is equal to $2$.
However the sequence of partial quotients is clearly unbounded, and it was
proved by Voloch \cite{V} that if $\alpha $ were hyperquadratic with an
unbounded sequence of partial quotients, then the irrationality measure
would be strictly greater than $2$ (the reader may consult \cite[p.215-216]%
{L2}, for a presentation of these general statements).
\smallskip

Now we show that $W$ is $(1+\sqrt{2})$-substitutive, but not automatic.
\smallskip

Put $A=\{a,b,c\}$, and define
\begin{equation*}
\sigma (a)=abca,\ \sigma (b)=ca,\ \sigma (c)=c,\ o(a)=1,\ o(b)=o(c)=2.
\end{equation*}

For all integers $n\geqslant 0$, set $V_{n}=\sigma ^{n}(a)$. Then for all
integers $n\geqslant 2$, we have%
\begin{equation*}
V_{n}=\sigma ^{n}(a)=\sigma ^{n-1}(abca)=V_{n-1}\sigma
^{n-2}(ca)cV_{n-1}=V_{n-1}cV_{n-2}cV_{n-1}.
\end{equation*}%
But we also have $W_{1}=o(a)$, and $W_{2}=1221=o(abca)=o(\sigma
(a))=o(V_{1}) $, thus $o(\sigma ^{n}(a))$ satisfies the same relations
as $W_{n+1}$, consequently they coincide. Set $%
v=\lim_{n\rightarrow \infty }\sigma ^{n}(a)$. Then $\sigma (v)=v$, and $%
W=o(v)$. So $W$ is substitutive. Finally we also have
\begin{equation*}
M_{\sigma }=\left(
\begin{tabular}{ccc}
2 & 1 & 0 \\
1 & 0 & 0 \\
1 & 1 & 1%
\end{tabular}%
\right) ,
\end{equation*}%
whose characteristic polynomial is equal to $(\lambda -1)(\lambda
^{2}-2\lambda -1)$, and $1+\sqrt{2}$ is the dominating eigenvalue.
Hence $W$ is $(1+\sqrt{2})$-substitutive.
Since $1+\sqrt{2}$ is multiplicatively independent
with all integers $k\geqslant 2$, according to Cobham's
characterization and Durand's theorem, we see that $W$
cannot be $k$-automatic unless it is ultimately periodic.
To conclude the proof, we need only prove that $W$ is not ultimately periodic.
To do so, we compute the frequency of $2$ in $W$.
For all integers $n\geqslant 0$, put $l_{n}=|W_{n}|$. Then we have
\begin{equation*}
l_{0}=0,\ l_{1}=1,\ \text{and}\ l_{n}=2l_{n-1}+l_{n-2}+2,\ \text{for all
integers}\ n\geqslant 2,
\end{equation*}%
from which we deduce $l_{n}=-1+\frac{2+\sqrt{2}}{4}(1+\sqrt{2})^{n}+%
\frac{2-\sqrt{2}}{4}(1-\sqrt{2})^{n}$, for all integers $n\geqslant 0$.
For all integers $n\geqslant 0$, let $m_{n}$ be the number of occurences of $%
2$ in $W_{n}$. Then
\begin{equation*}
m_{0}=m_{1}=0,\text{ and }m_{n}=2m_{n-1}+m_{n-2}+2,\text{ for all integers }%
n\geqslant 2,
\end{equation*}%
from which we obtain $m_{n}=-1+\frac{1}{2}\big((1+\sqrt{2})^{n}+((1-\sqrt{2})^{n}\big)$,
for all integers $n\geqslant 0$. If $W$ were ultimately periodic,
then the frequency of $2$ in $W$ would exist, and it would be a rational number,
in contradiction to $\lim_{n\rightarrow \infty }m_{n}/l_{n}=2-\sqrt{2}$.
\end{proof}

\noindent \textbf{Acknowledgments.} Jia-Yan Yao would like to thank the
National Natural Science Foundation of China (Grants no. 10990012 and
11371210) and the Morningside Center of Mathematics (CAS) for partial
financial support. \smallskip

\vskip 2 cm
\begin{tabular}{ll}
Alain LASJAUNIAS &  \\
Institut de Math\'{e}matiques de Bordeaux &  \\
CNRS-UMR 5251 &  \\
Talence 33405 &  \\
France &  \\
E-mail: Alain.Lasjaunias@math.u-bordeaux1.fr &  \\
&  \\
Jia-Yan YAO &  \\
Department of Mathematics &  \\
Tsinghua University &  \\
Beijing 100084 &  \\
People's Republic of China &  \\
E-mail: jyyao@math.tsinghua.edu.cn &
\end{tabular}

\end{document}